\documentclass{amsart}
\usepackage{url}

 \usepackage{xcolor}

\usepackage{amsmath,amsfonts,amsthm,amssymb}

\newtheorem{theorem}{Theorem}
\newtheorem{proposition}[theorem]{Proposition}

\newtheorem{remark}[theorem]{Remark}
\numberwithin{equation}{section} \numberwithin{theorem}{section}
\newcommand{\R}{\mathbb{R}}
\newcommand{\N}{\mathbb{N}}

\newcommand{\C}{\mathbb{C}}
\newcommand{\Z}{\mathbb{Z}}
\newcommand{\bP}{\mathbb{P}}
\renewcommand{\l}{\lambda}

\newcommand{\ep}{\varepsilon} 
\newcommand{\al}{\alpha} 
\newcommand{\Ea}{E_{\al}} 
\newcommand{\ua}{u_{\al}}

\begin{document}
\title[Gap theorem for $\al$-harmonic maps]{A gap theorem 
for $\al$-harmonic maps \\ between two-spheres}

\author{Tobias Lamm}
\address[T.~Lamm]{Institute for Analysis\\ 
Karlsruhe Institute of Technology (KIT)\\ 
Englerstr. 2\\ 76131 Karlsruhe\\ Germany}
\email{tobias.lamm@kit.edu}
\author{Andrea Malchiodi}
\address[A.~Malchiodi]{Scuola Normale Superiore\\ 
Piazza dei Cavalieri 7\\ 
50126 Pisa\\ Italy}
\email{andrea.malchiodi@sns.it}
\author{Mario Micallef}
\address[M.~Micallef]{Mathematics Institute\\ 
University of Warwick\\ 
Coventry CV4 7AL\\ UK}
\email{M.J.Micallef@warwick.ac.uk}

\date{\today}

\subjclass[2000]{}

\begin{abstract} 

In this paper we consider approximations introduced by Sacks-Uhlenbeck 
of the harmonic energy for maps from $S^2$ into $S^2$. 
We continue the analysis in \cite{lmm} about limits of 
$\al$-harmonic maps with uniformly bounded energy. Using a recent 
energy identity in \cite{lizhu}, we obtain an optimal gap theorem for the 
$\al$-harmonic maps of degree $-1, 0$ or $1$. 

\end{abstract}

\maketitle
\section{Introduction}

Given a compact Riemannian manifold $(N^n,h)$, 
the {\em energy functional} on the space of 
smooth closed curves $\gamma \colon S^1 \to N$ is defined by 
\begin{equation}\label{eq:1d-energy}
 E(\gamma) :=  \frac 12 \int_0^1 h(\dot{\gamma},\dot{\gamma})(t) \, dt \, . 
\end{equation} 
(Here we are viewing $S^1$ as $\R/\Z$, i.e., as the circle of length 1.) 
It is closely related to the length functional, but it  has the advantage to select 
not only curves that extremize length but also their parameterization, 
namely one of constant speed. 

It is easy to prove that there exists $\ep > 0$, depending only on $N$ and $h$, 
such that if $\gamma$ is a critical point of $E$ and $E(\gamma) < \ep$ 
then $\gamma$ is constant. Indeed, $\ep = \frac12 \ell^2$ 
where $\ell$ is the length of the shortest closed geodesic in $(N^n,h)$
($\ep = +\infty$ if $(N^n, h)$ contains no closed geodesics). 

This is one of the simplest examples of a gap theorem in Geometric Analysis. 
More significant gap theorems, just to name a few, 
have been established in minimal surface theory, 
the study of Willmore surfaces, Yang-Mills theory and conformal geometry. 
A selection of such theorems can be found in 
\cite{sim}, \cite{mn}, \cite{mnoo}, \cite{cgz}, \cite{gks} and {\em many others} in the literature. 

Let now $(M^2,g)$ be a compact Riemannian surface without boundary. 
As a natural generalization of \eqref{eq:1d-energy} one can consider 
the \emph{Dirichlet energy} $E(u)$ defined for every $u\in W^{1,2}(M,N)$  by 
\begin{equation}
  E(u) = \frac{1}{2}\int_M |\nabla u|^2 \, dA_M = 
  \int_M e(u) \, dA_M,\label{dirichlet}
\end{equation} 
with $e(u)=\frac12 |\nabla u |^2$  the \emph{energy density} of $u$.

While \eqref{eq:1d-energy} enjoys nice compactness properties, 
this is not the case for \eqref{dirichlet}. For example, 
if $(M^2,g)$ is the standard 2-sphere then the Dirichlet energy is invariant 
under composition on the right with M\"obius maps. 
To deal with this issue, a {\em regularization} of the 
following form was considered in  \cite{sacks81}. 
For $\al>1$ and $u\in W^{1,2\al}(M,N)$,
\begin{equation}
  \Ea(u):=\frac{1}{2}\int_M (2+|\nabla u|^2)^\al \, dA_M  \label{energy}
\end{equation}
(indeed in \cite{sacks81} the integrand was $(1+|\nabla u|^2)^\al$ but 
it is convenient for our purposes to choose this equivalent variant). 
The functional $E_\al$ has the advantage of satisfying 
the Palais-Smale compactness condition, and hence critical points 
can be found in every homotopy class. 
Critical points of $E_\al$ are called $\al$-harmonic maps. 
They are smooth and, if $(N,h)$ is embedded isometrically into  $\R^k$ 
with second fundamental form $A$, they satisfy 
\begin{equation}
  \Delta u+A(u)(\nabla u,\nabla u) = -2(\al-1) (2+|\nabla u|^2)^{-1} 
  \langle \nabla^2 u, \nabla u \rangle \nabla u. \label{EL2}
\end{equation} 
Of course, a sequence of solutions with uniformly bounded $E_\al$-energy 
might still develop blow-ups as $\al$ tends to one, 
but in \cite{sacks81} it was shown that, after proper rescaling, 
it would form a finite number of {\em bubbles} 
(i.e., non-trivial harmonic maps from $S^2$ to $N$). 

Theorem 3.3 in \cite{sacks81} is the gap theorem for $\al$-harmonic maps that 
is analogous to the gap theorem for geodesics stated at the beginning of this Introduction. 

\begin{theorem} \label{su:gap} (\cite{sacks81}) 
There exists $\ep > 0$ and $\al_0 > 1$ such that if $E(u) < \ep, \ 1 < \al < \al_0$ 
and $u$ is a critical map of $E_{\al}$, then $u$ is constant. 
\end{theorem}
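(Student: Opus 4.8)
The plan is to combine the small--energy regularity theory for $\al$-harmonic maps of \cite{sacks81} with an integration by parts against the Euler--Lagrange equation written in divergence form. Assuming, as in \eqref{EL2}, that $(N,h)$ is isometrically embedded in $\R^k$ with second fundamental form $A$, multiply \eqref{EL2} by $(2+|\nabla u|^2)^{\al-1}$ and rearrange: the second--order terms combine into a divergence, and one obtains the equivalent divergence form
\begin{equation*}
 -\operatorname{div}\!\left((2+|\nabla u|^2)^{\al-1}\nabla u\right) = (2+|\nabla u|^2)^{\al-1}\,A(u)(\nabla u,\nabla u),
\end{equation*}
valid for every critical point $u$ of $\Ea$ (equivalently, this is the weak Euler--Lagrange equation for $\Ea$); since such critical maps are smooth, no regularity caveat is needed. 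Here $\nabla$ and $\operatorname{div}$ are taken with respect to $g$, and $u$, $\nabla u$, $A(u)(\nabla u,\nabla u)$ are regarded as $\R^k$-valued.

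The first ingredient is the \emph{uniform $\ep$-regularity estimate} of \cite{sacks81}: there exist $\ep_1>0$, $\al_0>1$ and $C$, depending only on $(M,g)$ and $(N,h)$, such that whenever $1<\al<\al_0$, $u$ is $\al$-harmonic on a geodesic ball $B_{2\rho}(x)\subset M$ and $\int_{B_{2\rho}(x)}|\nabla u|^2\,dA_M<\ep_1$, one has $\rho^2\sup_{B_\rho(x)}|\nabla u|^2\le C\int_{B_{2\rho}(x)}|\nabla u|^2\,dA_M$. This is the step I expect to be the main obstacle: it is the ``main estimate'' of \cite[\S 3]{sacks81}, and the delicate point is that its constants must stay bounded as $\al\downarrow1$. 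This works because the right-hand side of \eqref{EL2} is $(\al-1)$ times a quantity controlled by $|\nabla^2 u|$, so for $\al_0$ close enough to $1$ it can be absorbed into the leading second--order operator throughout the Moser iteration (equivalently, the $W^{2,p}$-bootstrap) producing the estimate.

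Granting this, fix $\rho_0>0$ so small that every $B_{2\rho_0}(x)$, $x\in M$, lies in a normal coordinate chart, and take $\ep\le\tfrac12\ep_1$. If $E(u)<\ep$ and $1<\al<\al_0$, then $\int_{B_{2\rho_0}(x)}|\nabla u|^2\le 2E(u)<\ep_1$ for every $x$, so the $\ep$-regularity estimate applied on a finite subcover of $M$ by such balls gives $\sup_M|\nabla u|^2\le 2C\rho_0^{-2}\,E(u)$. Hence $\operatorname{osc}_M u\le C_M\,\|\nabla u\|_{L^\infty(M)}$ with $C_M$ depending only on $(M,g)$, and after shrinking $\ep$ (depending only on $M$, $N$, $\rho_0$) we may assume $\|A\|_{L^\infty(N)}\operatorname{osc}_M u<1$; in particular $u(M)$ lies in a small metric ball of $N$.

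Finally, fix $x_0\in M$, set $p:=u(x_0)$, test the divergence form of the equation against $u-p\in W^{1,2}(M,\R^k)$, and integrate over the closed surface $M$; integrating the left-hand side by parts yields
\begin{equation*}
 \int_M (2+|\nabla u|^2)^{\al-1}|\nabla u|^2\,dA_M = \int_M (2+|\nabla u|^2)^{\al-1}\big\langle A(u)(\nabla u,\nabla u),\,u-p\big\rangle\,dA_M .
\end{equation*}
Since $|A(u)(\nabla u,\nabla u)|\le\|A\|_{L^\infty(N)}|\nabla u|^2$ and $|u-p|\le\operatorname{osc}_M u$ pointwise, the right-hand side is at most $\|A\|_{L^\infty(N)}\operatorname{osc}_M u\cdot\int_M(2+|\nabla u|^2)^{\al-1}|\nabla u|^2\,dA_M$, and as the coefficient is $<1$ the identity forces $\int_M(2+|\nabla u|^2)^{\al-1}|\nabla u|^2\,dA_M=0$. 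Since $(2+|\nabla u|^2)^{\al-1}>0$, this gives $\nabla u\equiv0$, so $u$ is constant, proving the theorem.
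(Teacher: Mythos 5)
The paper offers no proof of Theorem \ref{su:gap}: it is quoted verbatim from \cite{sacks81} (their Theorem 3.3), so there is no internal argument to compare against. Your proof is correct, and it is essentially the classical Sacks--Uhlenbeck route: all of the genuine difficulty is concentrated in the Main Estimate (their \S 3.2) with constants uniform for $1<\al<\al_0$, which you import rather than prove --- exactly as Sacks--Uhlenbeck themselves deduce Theorem 3.3 as a consequence of that estimate --- and you correctly identify why uniformity holds (the right-hand side of \eqref{EL2} carries a factor $\al-1$ and is dominated by $|\nabla^2u|$, so it is absorbed into the leading operator). The remaining steps check out: multiplying \eqref{EL2} by $(2+|\nabla u|^2)^{\al-1}$ does reassemble the second-order terms into $\operatorname{div}\bigl((2+|\nabla u|^2)^{\al-1}\nabla u\bigr)$; the oscillation bound uses only connectedness of $M$ and the sup-gradient bound; and pairing the divergence-form equation with $u-p$ and integrating by parts on the closed surface is legitimate since $u$ is smooth. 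Your final absorption step is precisely the ``image in a small ball plus strictly convex function'' argument in disguise: testing against $u-p$ amounts to composing with $y\mapsto\tfrac12|y-p|^2$, whose Hessian restricted to $N$ is bounded below by $\bigl(1-\|A\|_{L^\infty}\,|y-p|\bigr)h$, which is exactly your smallness condition $\|A\|_{L^\infty}\operatorname{osc}_M u<1$. No gaps beyond the explicitly cited Main Estimate.
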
 

From now on, $(M^2,g)$ and $(N^n, h)$ will both be 
$(S^2,g_{S^2})$, the standard 2-sphere isometrically embedded in $\R^3$. 
For $u \colon S^2 \rightarrow S^2$, its {\em Jacobian} is 
$J(u)=u \cdot e_1(u) \wedge e_2(u)$ and its {\em energy density} is 
$\frac12 |\nabla u|^2$, where $(e_1, e_2)$ stands for 
a local oriented orthonormal frame of $TS^2$. 
The {\em degree} of $u$ is then 
\begin{equation}
  \deg (u)=\frac{1}{4\pi} \int_{S^2} J(u) \, dA_{S^2}.\label{deg}
\end{equation}
In our previous paper (\cite{lmm}, Proposition $7.1$) we improved the gap theorem above for maps of degree zero.
\begin{proposition}\label{degzero}
Fix $\eta>0$.Then there exists $\overline{\al}-1>0$ small, $\overline{\al}$ depending only on $\eta$, such that if $1<\al \leqslant \overline{\al}$ and $u:S^2\to S^2$ is $\al$-harmonic, of degree zero and $E(u)\leqslant 8\pi-\eta$, then $u$ is constant.
\end{proposition}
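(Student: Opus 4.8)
The plan is to argue by contradiction, exploiting the bubbling analysis together with the recent energy identity of Li--Zhu. Suppose the statement fails for some fixed $\eta>0$. Then there is a sequence $\al_i\downarrow 1$ and $\al_i$-harmonic maps $u_i\colon S^2\to S^2$ of degree zero with $E(u_i)\le 8\pi-\eta$ that are non-constant. The $E_{\al_i}$-energies are uniformly bounded, so by the Sacks--Uhlenbeck compactness theory a subsequence converges, away from finitely many concentration points, to a harmonic map $u_\infty\colon S^2\to S^2$, with a finite number of bubbles $\omega^1,\dots,\omega^K$ (non-trivial harmonic maps $S^2\to S^2$) splitting off at the concentration points. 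Since degree is preserved in the limit and harmonic maps from $S^2$ to $S^2$ are $\pm$holomorphic, $u_\infty$ and each bubble $\omega^j$ is $\pm$holomorphic, with $E(u_\infty)=4\pi|\deg u_\infty|$ and $E(\omega^j)=4\pi|\deg\omega^j|\ge 4\pi$.

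The next step is to impose the degree-zero and energy constraints on this bubble tree. First, if $u_\infty$ and all the bubbles were constant, then $u_i$ would have energy tending to zero, contradicting Proposition~\ref{degzero}'s own hypothesis only if we also invoke Theorem~\ref{su:gap}: for $i$ large, $E(u_i)<\ep$ forces $u_i$ constant, a contradiction. So the bubble tree is non-trivial: at least one of $u_\infty,\omega^1,\dots,\omega^K$ is non-constant, hence carries energy $\ge 4\pi$. The key input is now the \emph{energy identity} of \cite{lizhu}, which in this setting says that no energy is lost in the necks, i.e.
\begin{equation}\label{eq:ei}
 \lim_{i\to\infty} E(u_i) = E(u_\infty) + \sum_{j=1}^K E(\omega^j).
\end{equation}
Combined with the degree identity $0=\deg u_\infty+\sum_j \deg\omega^j$ and the $\pm$holomorphicity, this is quite restrictive. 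Because some component is non-constant, the right side of \eqref{eq:ei} is at least $4\pi$; in fact since $\deg u_\infty + \sum\deg\omega^j = 0$ and not all degrees vanish, there must be at least one bubble of positive degree and (at least) one component of negative degree, so the total energy $E(u_\infty)+\sum_j E(\omega^j)=4\pi\big(|\deg u_\infty|+\sum_j|\deg\omega^j|\big)$ is at least $4\pi\cdot 2=8\pi$. Hence $\lim_i E(u_i)\ge 8\pi$, contradicting $E(u_i)\le 8\pi-\eta$.

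I expect the main obstacle to be the careful invocation of the energy identity \eqref{eq:ei} in the $\al$-harmonic (as opposed to the classical Sacks--Uhlenbeck or two-dimensional harmonic map heat flow) setting: one must verify that the hypotheses of the Li--Zhu result apply to a blow-up sequence of $\al_i$-harmonic maps with $\al_i\to 1$, in particular that there is no neck energy and no neck length contribution, and that the bubbles are genuinely harmonic (not $\al$-harmonic for fixed $\al>1$). A secondary technical point is to make the bubble-tree degree accounting rigorous --- ensuring that $\deg$ passes to the limit on each component and that the decomposition $0=\deg u_\infty+\sum_j\deg\omega^j$ holds with no defect --- which again follows from \eqref{eq:ei} together with the fact that, for maps into $S^2$, the energy controls the degree via $E\ge 4\pi|\deg|$ with equality iff $\pm$holomorphic. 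Once \eqref{eq:ei} is in hand, the rest is the elementary observation that a non-trivial degree-zero bubble tree into $S^2$ must contain components of both signs and hence has energy at least $8\pi$, which is exactly the threshold appearing in the statement; the $\eta$ is precisely the room needed to rule out the borderline configurations in the limit.
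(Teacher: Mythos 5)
Your proposal is correct and follows essentially the same route as the paper's proof (given in the Remark at the end of Section 2): a contradiction argument combining the Li--Zhu energy identity with the degree decomposition of the bubble tree and the fact that non-trivial harmonic maps $S^2\to S^2$ are $\pm$holomorphic with $E=4\pi|\deg|\geqslant 4\pi$, so a non-trivial degree-zero bubble tree costs at least $8\pi$. The only cosmetic difference is that the paper absorbs the body map into the list of bubbles $\omega^i$ and cites Duzaar--Kuwert for the degree identity, which you correctly flag as the technical point to be checked.
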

While there exist $\alpha$-harmonic maps of degree zero 
whose energy converges from above to $8\pi$ as $\alpha \searrow 1$, see Theorem \ref{t:optimal} below, it is an interesting 
question to understand whether the convergence of the energies might be from below. 
We have preliminary numerical evidence that this somewhat unexpected phenomenon may, in fact, occur. 
\newline
\newline
If $u \in W^{1,2\al}(S^2,S^2)$ 
has degree one, we have 
\begin{align}
  8\pi &= \int_{S^2} (1+J(u)) \, dA_{S^2} \nonumber \\
  &\leqslant \int_{S^2} (1+e(u)) \, dA_{S^2} \label{est1} \\ \nonumber 
  &\leqslant \frac12 \, (2 \Ea(u))^{\frac{1}{\al}} 
  (4\pi)^{\frac{\al-1}{\al}}, 
\end{align}
which implies 
\begin{equation}
  \Ea(u)\geqslant 4^{\al} 2\pi \label{est2}. 
\end{equation}
We have equality if and only if $u$ is conformal 
with constant energy density equal to one, 
which happens only if $u$ is a rotation. 
In \cite{lmm} we proved the following result which is again an extension of the gap theorem 
\ref{su:gap} to this special setting. 

\begin{theorem} \label{t:main} (\cite{lmm})
There exists $\ep > 0$ and $\overline{\al}-1 > 0$  small 
such that the only critical points $\ua$ of $\Ea$ which satisfy 
$\Ea (\ua) \leqslant 4^{\al} 2\pi + \ep$ and $\al \leqslant \overline{\al}$ 
are the constant maps and the maps of the form $u^R(x)=Rx$ with $R \in O(3)$. 
\end{theorem}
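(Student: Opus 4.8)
The plan is to argue by contradiction: assume there are sequences $\al_k\searrow 1$, $\ep_k\searrow 0$ and critical points $u_k$ of $E_{\al_k}$ with $E_{\al_k}(u_k)\leqslant 4^{\al_k}2\pi+\ep_k$, none of which is constant or of the form $u^R$. The first step fixes the topology and the Dirichlet energy. Running the computation \eqref{est1}--\eqref{est2} \emph{without} the degree hypothesis, Jensen's inequality alone gives $E_{\al_k}(u_k)\geqslant\frac{(8\pi+2E(u_k))^{\al_k}}{2(4\pi)^{\al_k-1}}$, whence $E(u_k)\leqslant 4\pi+o(1)$; since a degree $d$ map from $S^2$ to $S^2$ has $E\geqslant 4\pi|d|$, we conclude $\deg(u_k)\in\{-1,0,1\}$ for $k$ large. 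The case $\deg(u_k)=0$ is excluded by Proposition~\ref{degzero} applied with any fixed $\eta\in(0,4\pi)$ (for $k$ large $E(u_k)<8\pi-\eta$ and $\al_k\leqslant\overline\al$, forcing $u_k$ constant). Composing with a fixed reflection of the target if necessary --- this preserves $\al$-harmonicity and the family $\{u^R\}_{R\in O(3)}$ --- we may therefore assume $\deg(u_k)=1$ and $4\pi\leqslant E(u_k)\leqslant 4\pi+o(1)$.

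The second step is a bubble analysis. By the Sacks--Uhlenbeck compactness theory, after passing to a subsequence $u_k$ converges weakly in $W^{1,2}$, and strongly in $C^\infty_{loc}$ off finitely many points, to a harmonic map, with finitely many bubbles; the total limiting energy already equals the minimal value $4\pi$ admissible in degree one, so no energy is lost along the necks, and since every nonconstant harmonic map $S^2\to S^2$ has energy $4\pi|\deg|\geqslant 4\pi$ while the degree passes to the limit, exactly one component of the bubble tree is a (degree one) nonconstant harmonic map, all others being constant. Degree one harmonic maps $S^2\to S^2$ are precisely the M\"obius maps $\phi_q$, $q$ in the $6$-dimensional group $PSL(2,\C)$; hence $u_k$ is close in $W^{1,2}$ --- and, away from at most one point, in $C^1$ --- to $\phi_{q_k}$ for suitable $q_k$, with $q_k$ in a compact set precisely when no genuine concentration occurs.

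The third, central step is a Lyapunov--Schmidt reduction onto $PSL(2,\C)$. Write $u_k=\phi_{q_k}+w_k$ with $w_k$ orthogonal to the tangent space of the M\"obius orbit at $\phi_{q_k}$, which is exactly the kernel of the (nonnegative, since $\phi_{q_k}$ minimizes energy in its homotopy class) Jacobi operator of $E$ at $\phi_{q_k}$; for $q_k$ in a compact set the reduction is nondegenerate and $\|w_k\|=O(\al_k-1)$. Expanding $(2+|\nabla\phi_q|^2)^{\al}=(2+|\nabla\phi_q|^2)\bigl(1+(\al-1)\log(2+|\nabla\phi_q|^2)+\dots\bigr)$ and using $\frac12\int_{S^2}(2+|\nabla\phi_q|^2)\,dA_{S^2}=8\pi$ for every $q$, the reduced functional is
\begin{equation*}
  E_{\al_k}(\phi_{q_k}+w_k)=8\pi+\frac{\al_k-1}{2}\,F(q_k)+O\bigl((\al_k-1)^2\bigr),\qquad
  F(q):=\int_{S^2}(2+|\nabla\phi_q|^2)\log(2+|\nabla\phi_q|^2)\,dA_{S^2},
\end{equation*}
so critical points of $E_{\al_k}$ near the M\"obius family correspond, as $k\to\infty$, to critical points of $F$ on $PSL(2,\C)$. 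Since $F$ is invariant under rotations of domain and target, on the quotient it depends only on the dilation parameter $\l\geqslant 1$; with $e_\l=\tfrac12|\nabla\phi_\l|^2$ one has $\int_{S^2}(2+2e_\l)\,dA_{S^2}=16\pi$ for all $\l$, $e_\l$ is a strictly decreasing radial profile that concentrates as $\l\uparrow\infty$, and $\int_{S^2}\partial_\l e_\l\,dA_{S^2}=0$ with $\partial_\l e_\l$ positively correlated with $e_\l$; as $x\mapsto x\log x$ is strictly convex this forces $F'(\l)>0$ for $\l>1$. Thus the only critical points of $F$ are the rotations; and since the rotations form a nondegenerate critical $3$-manifold of $E_{\al_k}$ itself (each $u^R$ being $\al$-harmonic because $|\nabla u^R|^2\equiv 2$), a standard rigidity argument upgrades ``close to a rotation'' to ``equal to a rotation'', contradicting the choice of $u_k$.

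The hard part is the uniformity of Steps~2--3 when the M\"obius parameter degenerates, i.e.\ the exclusion of genuine bubbling. The energy bound alone only controls $(\al_k-1)\log\l_k$, not $\l_k$, so when $\l_k\to\infty$ the coercivity constant of the Jacobi operator on the transverse slice and the constants in the expansion of $E_{\al_k}$ both deteriorate, and the reduction must be carried out in conformally rescaled norms. One must then show, using the \emph{quantitative} no-neck energy estimate (of the type established in \cite{lizhu}), that the transverse error stays small in a scale-invariant sense, so that the strict monotonicity $F'(\l)>0$ persists and the reduced gradient remains bounded away from zero in the degenerate region, ruling out any critical point with $\l_k\to\infty$. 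This analytic point, rather than the soft topology of Step~1 or the finite-dimensional convexity of Step~3, is where the weight of the argument lies.
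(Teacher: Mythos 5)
First, note that this paper does not actually prove Theorem \ref{t:main}: it is imported from \cite{lmm}, and the only indication given here of its proof is the sentence following the statement (closeness of $\ua$ to a M\"obius map in strong norms, followed by a finite-dimensional reduction in the spirit of \cite{amma1} with refined asymptotic expansions). Measured against that description, your outline follows essentially the same route: reduction to degree $\pm 1$, identification of the limit with a single M\"obius bubble, Lyapunov--Schmidt reduction over $PSL(2,\C)$, and monotonicity of the reduced functional in the dilation parameter. The soft steps are fine (the Jensen computation giving $E(u_k)\leqslant 4\pi+o(1)$, the degree trichotomy, the disposal of degree zero via Proposition \ref{degzero}, the identification of degree-one harmonic maps with M\"obius maps), and the first-order expansion of the reduced energy with $F(q)=\int_{S^2}(2+|\nabla\phi_q|^2)\log(2+|\nabla\phi_q|^2)\,dA_{S^2}$ is the correct leading term.

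As a proof, however, the proposal has a genuine gap, and it is the one you yourself flag: everything rests on carrying out the reduction uniformly as the dilation parameter $\l_k$ degenerates, and you do not do this --- you only state that ``one must show'' the transverse error stays small in a scale-invariant sense. Under the hypothesis $\Ea(\ua)\leqslant 4^{\al}2\pi+\ep$ the concentrated regime is not excluded a priori (the energy bound only controls $(\al-1)\log\l_k$, exactly as you observe), so this step cannot be deferred; it is where all of the ``refined asymptotic expansions'' of \cite{lmm} live, and it is the reason the smallness of $\ep$ enters. Moreover, the tool you propose for it --- a quantitative no-neck estimate of the type in \cite{lizhu} --- is not how \cite{lmm} proceeds, and note that in the present paper the \cite{lizhu} energy identity is used to \emph{reduce} Theorem \ref{t:new} to Theorem \ref{t:main}, not to prove the latter: such estimates yield closeness to a M\"obius map but not the exact rigidity, which must come from the nondegenerate reduction. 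Finally, two finite-dimensional facts are asserted with only heuristic justification and would need honest proofs: the strict monotonicity $F'(\l)>0$ for $\l>1$ (your ``positive correlation'' argument is not a proof, though the claim is true and checkable by direct computation), and the nondegeneracy of $\{u^R\}_{R\in O(3)}$ as a critical manifold of $\Ea$ for $\al>1$, which is what upgrades ``close to a rotation'' to ``equal to a rotation''.
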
 

Since the only minimizers of $E$ among maps from $(S^2,g_{S^2})$ to itself of 
degree one are rational (see \cite{eelm}), it is natural to expect that the functions 
$u_\al$ as in Theorem \ref{t:main} are close in $W^{1,2}(S^2)$ to a rational map. 
Indeed, in \cite{lmm} this is proved in stronger norms and for specific 
rational maps (which are of M\"obius type, the degree being equal to $1$). 
A finite-dimensional reduction of the form in \cite{amma1}, together with refined 
asymptotic expansions then yields Theorem \ref{t:main}. It is somehow 
natural to expect the above result, since the $\al$-energy {\em breaks} 
the M\"obius symmetry and penalizes functions with large gradients. 

In this paper, exploiting some bubbling estimates of new type from \cite{lizhu}, 
we are able to quantitatively and optimally improve Theorem \ref{t:main} in order to obtain a result which resembles the one obtained in Proposition \ref{degzero}. 
This is our result. 
\begin{theorem} \label{t:new} 
Fix $\eta>0$. Then there exists $\overline{\al}-1 > 0$ 
so that the only critical points $\ua$ of $\Ea$ of degree $\pm 1$ which satisfy 
$\Ea (\ua) \leqslant 8^{\al} 2 \pi-\eta$ and $1<\al \leqslant \overline{\al}$ 
are maps of the form $u^R(x)=Rx$ with $R \in O(3)$. 
\end{theorem}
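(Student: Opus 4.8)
The plan is to argue by contradiction: suppose there is $\eta>0$ and a sequence $\alpha_k \searrow 1$ together with $\alpha_k$-harmonic maps $u_{\alpha_k}\colon S^2\to S^2$ of degree $\pm 1$ with $E_{\alpha_k}(u_{\alpha_k})\leqslant 8^{\alpha_k}\,2\pi-\eta$, none of which is a rotation. The exponent $8^{\alpha_k}\,2\pi$ is exactly the value one gets from a base map of energy $4\pi$ together with one bubble of energy $4\pi$ (so total Dirichlet energy $8\pi$); indeed $\frac12(2+c)^{\alpha}$ integrated over a configuration whose effective energy is $8\pi$ produces $8^{\alpha}2\pi$ in the limit via the computation in \eqref{est1}–\eqref{est2} applied to the two pieces separately. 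First I would invoke the Sacks–Uhlenbeck bubbling analysis (as sharpened in \cite{lmm}): along a subsequence $u_{\alpha_k}$ converges weakly in $W^{1,2}$ to a harmonic map $u_0\colon S^2\to S^2$, with finitely many bubbles $\omega_1,\dots,\omega_m$ splitting off at concentration points, all harmonic maps $S^2\to S^2$, hence each of energy a multiple of $4\pi$; and $\deg u_0+\sum_j \deg\omega_j=\pm 1$ by degree conservation.

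The crucial input is the energy identity of \cite{lizhu}, which is \emph{sharp at the level of the $\alpha$-energy}: it gives
\begin{equation*}
  \lim_{k\to\infty} E_{\alpha_k}(u_{\alpha_k})
  = 2^{\alpha_k}\Big(E(u_0)+\sum_{j=1}^{m} E(\omega_j)\Big) \cdot (\text{the relevant normalization}),
\end{equation*}
so more precisely $\lim_k E_{\alpha_k}(u_{\alpha_k})$ equals $2\pi$ times $(2+2\cdot\frac{1}{4\pi}E(u_0))^{\alpha}\cdot$-type contributions that, when each energy is a multiple of $4\pi$, sum to at least $8^{\alpha}2\pi$ unless the total energy is $\leqslant 4\pi$. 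The point is that because bubbling forces a \emph{no-neck} energy splitting into quanta of $4\pi$, if $u_{\alpha_k}$ is not a single rotation then either (a) $u_0$ is nonconstant (energy $\geqslant 4\pi$) and at least one bubble appears (another $\geqslant 4\pi$), forcing $\liminf_k E_{\alpha_k}(u_{\alpha_k})\geqslant 8^{\alpha}2\pi$, contradicting the strict bound $8^{\alpha_k}2\pi-\eta$; or (b) $u_0$ is constant and there is exactly one bubble of degree $\pm 1$ and energy $4\pi$ — i.e. $u_{\alpha_k}$ is close to a single bubble with no base — and then $E_{\alpha_k}(u_{\alpha_k})\to 4^{\alpha}2\pi\leqslant 8^{\alpha}2\pi-\eta$ for small $\alpha-1$, which is allowed; or (c) there is no bubbling at all, convergence is strong, and $u_0$ is a degree $\pm1$ harmonic map of energy exactly $4\pi$, i.e. a rotation, and $u_{\alpha_k}\to u_0$ strongly.

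Case (a) is immediately excluded. The real work is disposing of cases (b) and (c) — i.e. showing that when $u_{\alpha_k}$ lies in the ``one-bubble, no base'' regime (equivalently $E_{\alpha_k}(u_{\alpha_k})\to 4^\alpha 2\pi$), it must actually be a rotation for $\alpha$ close enough to $1$. This is precisely where Theorem \ref{t:main} enters: a single bubble with no neck and no base has $E_{\alpha_k}(u_{\alpha_k})$ converging to $4^\alpha 2\pi = 4^{\alpha_k}2\pi+o(1)$, so for $k$ large the hypothesis $\Ea(\ua)\leqslant 4^{\alpha_k}2\pi+\epsilon$ of Theorem \ref{t:main} is met, and that theorem forces $u_{\alpha_k}$ to be a rotation (the constant alternative being ruled out by the degree being $\pm1$). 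Thus the contradiction is reached: the assumed non-rotation sequence cannot exist. The main obstacle — and the step requiring the \cite{lizhu} estimates rather than the classical Sacks–Uhlenbeck identity — is establishing the \emph{sharp} lower bound $\liminf_k E_{\alpha_k}(u_{\alpha_k})\geqslant 8^{\alpha_k}2\pi$ in every configuration that has nontrivial base \emph{and} a bubble: the classical energy identity controls the Dirichlet energy but loses a definite amount in passing to the $\alpha$-energy through the neck regions, whereas \cite{lizhu} provides exactly the no-neck/no-loss statement at the level of $E_\alpha$ that makes the quantization into $8^\alpha 2\pi$-sized jumps rigorous; quantifying this, and checking the transition between the ``$4\pi+4\pi$'' and the ``single $4\pi$'' regimes is clean (no intermediate values of the limiting $\alpha$-energy are possible), is the heart of the argument.
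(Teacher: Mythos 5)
Your overall skeleton (argue by contradiction, bubble, use the Li--Zhu energy identity at the level of $E_\al$, reduce to a single bubble, and finish with Theorem \ref{t:main}) matches the paper, but the step that does the real work is wrong. You read the threshold $8^{\al}2\pi$ as the $\al$-energy of a ``base $4\pi$ plus one bubble $4\pi$'' configuration and then claim that any configuration with a nontrivial base \emph{and} a bubble has $\liminf_k E_{\al_k}(u_k)\geqslant 8^{\al_k}2\pi$. That inequality is false. The correct form of the energy identity (as in \eqref{energyidentity}) is
\[
\lim_{k\to\infty}E_{\al_k}(u_k)=4\pi+\sum_{i=1}^m E(\omega^i)=4\pi\Big(1+\sum_{i=1}^m|\deg(\omega^i)|\Big),
\]
so a base-plus-one-bubble configuration (total Dirichlet energy $8\pi$) has limiting $\al$-energy $12\pi$, which is well below $\lim_k 8^{\al_k}2\pi=16\pi$; equivalently, by the H\"older estimate \eqref{est1}--\eqref{est2} a map of Dirichlet energy $8\pi$ only satisfies $E_\al\geqslant 6^{\al}2\pi$, not $\geqslant 8^{\al}2\pi$. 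The threshold $8^{\al}2\pi-\eta$ is the H\"older barrier for Dirichlet energy $12\pi$: it only yields $\lim_k E(u_k)<12\pi$, hence $\sum_i|\deg(\omega^i)|\leqslant 2$. Two quanta are therefore \emph{not} excluded by the energy hypothesis, and your case (a) is not ``immediately excluded.''

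What actually kills the two-bubble case is the degree decomposition $\sum_i\deg(\omega^i)=\deg(u_k)=\pm1$ (Theorem 2 of \cite{duzkuw}), which you mention but never put to use: two nontrivial harmonic spheres with $|\deg(\omega^1)|+|\deg(\omega^2)|\leqslant 2$ each have degree $\pm1$, and no two such integers sum to $\pm1$. Hence $m=1$ and $|\deg(\omega^1)|=1$, so $\lim_k E_{\al_k}(u_k)=8\pi$, and only at that point does Theorem \ref{t:main} apply in the way you describe. The fix is concrete: replace your (false) quantization lower bound $8^{\al}2\pi$ for mixed configurations by the pair of constraints $\sum_i|\deg(\omega^i)|\leqslant 2$ (H\"older plus the energy identity) and $\sum_i\deg(\omega^i)=\pm1$ (degree conservation). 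Your displayed version of the Li--Zhu identity (a multiplicative $2^{\al_k}$ factor times an unspecified normalization) should also be corrected to the additive form above.
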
 

In the final section of this paper we shall show that Proposition \ref{degzero} is optimal in the following sense. 
\begin{theorem} \label{t:optimal} 
For every $\ep > 0$, there exists $\al_0 > 1$ which depends only on $\ep$ such that, 
if $1 < \al < \al_0$ there exists an $\al$-harmonic map $\ua \colon S^2 \to S^2$ with 
$\deg(\ua) = 0$ and $6^{\al} 2 \pi \leqslant \Ea(\ua) < 6^{\al} 2 \pi + \ep$. 
\end{theorem}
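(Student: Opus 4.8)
The plan is to realise $\ua$ inside the class of $S^1$-equivariant maps. Writing both the domain and the target $S^2$ in polar coordinates, such a map has the form $u(s,\theta)=(\psi(s),\theta)$ with $s\in[0,\pi]$ and $\psi(0),\psi(\pi)\in\pi\Z$ (the smoothness condition at the poles); one computes $\deg u=\tfrac12(\cos\psi(0)-\cos\psi(\pi))$, so that $\psi(0)=\psi(\pi)=0$ forces $\deg u=0$. Moreover $E(u)=\pi\int_0^\pi\big((\psi')^2+\tfrac{\sin^2\psi}{\sin^2 s}\big)\sin s\,ds$ and $\Ea(u)=\pi\int_0^\pi\big(2+(\psi')^2+\tfrac{\sin^2\psi}{\sin^2 s}\big)^{\al}\sin s\,ds$. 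By the principle of symmetric criticality it is enough to produce a non-constant critical point $\psi_\al$ of the latter functional in this class; being a solution of an ODE regular at both poles, it is automatically smooth and hence $\al$-harmonic on $S^2$.

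First I would record the lower bound, which needs no construction. For any equivariant $u$ as above, the arithmetic--geometric mean inequality gives $E(u)\geqslant 2\pi\int_0^\pi|\psi'\sin\psi|\,ds=2\pi\,\mathrm{TV}_{[0,\pi]}(\cos\psi)$, so $E(u)\geqslant 8\pi$ whenever $\psi$ attains a value $\geqslant\pi$ somewhere. Applying Jensen's inequality to the convex function $t\mapsto t^\al$ with respect to the probability measure $\tfrac12\sin s\,ds$ on $[0,\pi]$ then yields $\Ea(u)\geqslant 2\pi\big(2+\tfrac{1}{2\pi}E(u)\big)^\al\geqslant 6^\al\,2\pi$. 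It therefore suffices to exhibit an $\al$-harmonic equivariant map of degree $0$ whose profile attains a value $\geqslant\pi$ and whose $\Ea$-energy is at most $6^\al\,2\pi+\ep$.

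For the construction I would use a shooting argument for the Euler--Lagrange ODE of $\Ea$ in the variable $\psi$. For $\beta>0$ let $\psi_{\al,\beta}$ be the solution regular at $s=0$ with $\psi_{\al,\beta}(0)=0$, $\psi_{\al,\beta}'(0)=\beta$ (for $\al=1$ this is the harmonic profile $\tan(\psi/2)=\beta\tan(s/2)$, which has $\psi(\pi)=\pi$ and degree $1$ for every $\beta$). Taking $\al$ close to $1$ and $\beta$ large, near $s=0$ the trajectory forms a rescaled conformal degree-$1$ bubble of scale $\sim\beta^{-1}$ which drives $\psi$ into a neighbourhood of the equilibrium $\psi=\pi$; there the term proportional to $\al-1$ on the right-hand side of \eqref{EL2} makes the trajectory drift, and a continuity (intermediate value) argument in $\beta$ should produce, for each $\al$ sufficiently close to $1$, a value $\beta=\beta_\al\to\infty$ for which $\psi_{\al,\beta_\al}$ sweeps up to (at least) $\pi$ and returns to $0$ exactly at $s=\pi$, forming a second, mirror-symmetric bubble near $s=\pi$. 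The resulting $\ua$ is then smooth, equivariant, of degree $0$ and its profile attains a value $\geqslant\pi$, so it is $\al$-harmonic with $\Ea(\ua)\geqslant 6^\al\,2\pi$ by the previous paragraph.

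It remains to bound $\Ea(\ua)$ from above. I would compare $\ua$ with the explicit competitor $\Phi_\l$ obtained by splicing together the bubble $\tan(\psi/2)=\l^{-1}\tan(s/2)$ near $s=0$, the constant $\psi\equiv\pi$ on the bulk, and the mirror bubble near $s=\pi$; a direct computation should give $\Ea(\Phi_\l)=2^\al\,2\pi+\tfrac{4\pi\,2^\al}{2\al-1}\,\l^{2-2\al}+o(1)$ as $\l\to 0$, which for a suitable choice $\l=\l_\al\to0$ with $(\al-1)|\log\l_\al|\to0$ tends to $12\pi$ at exactly the rate at which $6^\al\,2\pi\to12\pi$, giving $\Ea(\Phi_{\l_\al})<6^\al\,2\pi+\ep$ for $\al$ near $1$. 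The remaining point is to show that the scale $\beta_\al^{-1}$ selected by the shooting obeys the same condition, so that $\Ea(\ua)=\Ea(\Phi_{\beta_\al^{-1}})+o(1)<6^\al\,2\pi+\ep$. I expect the main obstacle to be precisely this ODE analysis of the ``neck'': one must prove that the trajectory launched as a bubble reaches $\pi$, lingers there for the correct length of ``time'', returns to $0$ at $s=\pi$, and does so with a bubble scale that is neither too small nor too large; this is a singular-perturbation problem in which the two parameters $\al-1$ and $\beta_\al^{-1}$ must be tracked simultaneously, and it is here that the quantitative bubbling information is needed.
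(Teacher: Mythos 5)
Your lower bound is exactly the paper's (Jensen's inequality plus the total-variation estimate $\int_0^\pi|\psi'\sin\psi|\,ds\geqslant 4$ once the profile reaches $\pi$), but the existence part of your argument has a genuine gap, and it stems from your choice of boundary conditions. With $\psi(0)=\psi(\pi)=0$ the admissible class contains the constant $\psi\equiv 0$, so the minimizer of $\Ea$ in this class is trivial and you are forced into the shooting/singular-perturbation analysis that you yourself flag as the main obstacle and do not carry out: nothing in the proposal shows that for $\al$ close to $1$ there is a shooting parameter $\beta_\al$ for which the trajectory climbs to $\pi$, lingers, and returns to $0$ exactly at $s=\pi$. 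Moreover, even granting existence, the critical point so constructed is not a minimizer of anything, so the comparison $\Ea(\ua)=\Ea(\Phi_{\beta_\al^{-1}})+o(1)$ with your spliced competitor has no justification --- you would additionally need to prove that the bubble scale selected by the shooting matches the optimal one, which is a second unproved claim.

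The paper sidesteps both difficulties with one device: it imposes $f(0)=0$, $f(\pi)=2\pi$. The resulting map $u_f$ still has degree zero (since $\cos 0=\cos 2\pi$), but the admissible class $X$ now contains no constants, so the direct method (as in Section 8 of \cite{lmm}) produces a genuinely non-constant minimizer $f^*$, automatically a smooth $\al$-harmonic map; the intermediate value theorem forces $f^*(r_1)=\pi$ for some $r_1$, which yields the lower bound $6^{\al}2\pi$ exactly as in your second paragraph; and the upper bound is free, since $\Ea(u_{f^*})\leqslant\Ea(u_f)$ for the explicit competitor $f(r)=2\arctan(\l\tan r)$ sweeping from $0$ to $2\pi$, whose energy is computed to be below $6^{\al}2\pi+C(\al-1)^{1/2}$ for the choice $\l=(\al-1)^{-1/4}$. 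If you replace your boundary condition at $s=\pi$ by $\psi(\pi)=2\pi$ and minimize, your argument closes without any ODE analysis.
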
 

The $\al$-harmonic map in Theorem \ref{t:optimal} is produced by the 
methods of Section 8 in \cite{lmm} where we constructed an example of 
an $\al$-harmonic map $\ua$ of degree one with $\Ea(\ua) > 8^{\al} 2 \pi$. 
We believe that this degree 1 example is also optimal in a sense similar to that in 
Theorem \ref{t:optimal}, i.e., $\Ea(\ua)$ is close to $8^{\al} 2 \pi$ if $\al$ is close to 1. 

It would be interesting to understand whether 
one could replace the energy bounds in Theorem \ref{t:new} 
by Morse-index estimates as in \cite{micallef88}, \cite{uhlenbeck72}). 

\bigskip 
\noindent {\bf Acknowledgements} Tobias Lamm 
gratefully acknowledges financial support by the 
Deutsche Forschungsgemeinschaft (DFG) through 
CRC $1173$ and RTG $2229$. 
Andrea Malchiodi is supported by the project 
{\em Geometric problems with loss of compactness} 
from Scuola Normale Superiore and by 
MIUR Bando PRIN 2015 2015KB9WPT$_{001}$.  
He is also a member of GNAMPA as part of INdAM.  

\section{Proof of Theorem \ref{t:new}}

Without loss of generality, we assume that $u_\al$ is 
a critical point of $E_\al$ with $\deg(u_{\al})= 1$ and  
\[
E_{\al}(u_\al)\leqslant 8^{\al}2\pi-\eta.
\]
It then follows from H\"older's inequality that 
\begin{align}
  E(u_\al) &= \int_{S^2} (1+e(\ua)) \, dA_{S^2}-4\pi  \nonumber \\
  &\leqslant \frac12 \, (2 \Ea(\ua))^{\frac{1}{\al}} 
  (4\pi)^{\frac{\al-1}{\al}}-4\pi \nonumber \\
  &\leqslant 12\pi-\tilde{\eta} \label{boundE}
\end{align}
where $\tilde{\eta}>0$.

We claim that for $(\al-1)$ small enough all these critical points have to be 
of the form $u_{\al}(x)=Rx$ with $R\in SO(3)$. 

Assume, for a contradiction, that there exists a sequence $\al_k \searrow 1$ and 
a sequence $u_k:=u_{\al_k}$ of critical points of $E_k:=E_{\al_k}$ with 
$\deg(u_k)= 1$ and $E_k(u_k) \leqslant 8^{\al_k}2\pi-\eta$ 
which are not of the form $u_k=R_kx$ with $R_k\in SO(3)$. 
It follows from Theorem 1.1 in \cite{lizhu} that, 
for a subsequence (which we still denote by $u_k$), the energy identity
\[
12\pi\ >\lim_{k\to \infty} E(u_k) = 
\sum_{i=1}^m E(\omega^i)=4\pi\sum_{i=1}^m|\deg(\omega^i)|,
\]
where $m\in \N$ and the $\omega^i \colon S^2\to S^2$, $1\leqslant i \leqslant m$, 
are non-trivial harmonic maps, holds true. 
More precisely, the authors show that $\lim_{k \to \infty} r_k^{1-\alpha_k}=1$ 
(see equation (4.9) in \cite{lizhu}), where $r_k$ is 
an energy concentration radius on which a bubble forms. 
This is precisely the information which is needed in \cite{lamm10} 
(the entropy condition in this paper was only used in order to obtain 
the same limiting relation between $\al_k$ and the concentration radius) 
or \cite{liwang} in order to conclude the even stronger energy identity
\begin{align}
\lim_{k\to \infty} E_k(u_k) = 4\pi + \sum_{i=1}^m E(\omega^i) = 
4\pi(1+\sum_{i=1}^m |\deg(\omega^i)|). \label{energyidentity}
\end{align}
Note that  these results rely crucially on the assumption that 
the target manifold is a round sphere since it is known that 
the energy identity is not true in general for a sequence of $\al$-harmonic maps; 
see \cite{liwang2}.
Finally, it follows from Theorem 2 in \cite{duzkuw} that we have the decomposition
\[
1= \deg(u_k) =\sum_{i=1}^m \deg(\omega^i).
\]
Now we get from the above estimates that 
\[
\sum_{i=1}^m |\deg(\omega^i)| < 3 \quad\text{and}\quad 
\sum_{i=1}^m \deg(\omega^i)=1. 
\] 
Since $\deg(\omega^i) \neq 0$, the first inequality gives $m = 1$ or $2$ 
and the second equality then only allows $m=1$ and $\deg(\omega^1) = 1$. 
Looking back at \eqref{energyidentity} we see that, for every $\delta > 0$ 
there exists $k$ large enough so that 
\[
E_k(u_k) < 8\pi + \delta \leqslant 4^{\al_k} 2 \pi + \delta/2.
\]
Hence it follows from Theorem \ref{t:main} that we get a contradiction to the fact that 
$u_k$ is not a map of the form described in this Theorem.

\begin{remark}
Here we want to highlight that the above arguments for maps of degree one also yield a new proof of Proposition \ref{degzero}.

Namely, by following the same contradiction argument as above, 
we obtain a sequence $\al_k \searrow 1$ and 
a sequence of non-constant $\al_k$-harmonic maps $u_k$ with $\deg(u_k) =0$ 
and finitely many non-trivial harmonic maps $\omega^i \colon S^2\to S^2$, 
$1 \leqslant i \leqslant m$, so that 
\begin{gather*}
 8 \pi > \lim_{k\to \infty} E(u_k)= 4\pi \sum_{i=1}^m |\deg(\omega^i)|,\\
\lim_{k\to \infty} E_k(u_k) = 4\pi(1+\sum_{i=1}^m|\deg(\omega^i)|) 
\quad\text{and}\quad \deg(u_k)=0=\sum_{i=1}^m \deg(\omega^i).
\end{gather*}
It follows that $m=1$, $\deg(\omega^1)=0$ which contradicts the fact that $\omega^i$ is supposed to be non-constant.
\end{remark}

\section{Proof of Theorem \ref{t:optimal}} 

\subsection{Construction of a non-constant $\al$-harmonic map of degree zero} 
As in Section $8$ in our paper \cite{lmm}, 
we construct the desired map as a minimizer of the $\al$-energy 
in a certain class of co-rotationally symmetric maps. 
More precisely, we consider the parameterisation
\[ 
(r,\theta) \mapsto (\sin r \, \cos \theta, \ \sin r \, \sin \theta, \ \cos r)
\] 
of $S^2$, where $n \in \N, \ r \in [n\pi, (n+1)\pi]$ and $\theta \in [0,2\pi]$. 
We are then interested in maps $u_f \colon S^2\to S^2$ such that 
\[ 
(r,\theta) \mapsto (\sin (f(r)) \cos \theta, \ \sin (f(r)) \sin \theta, \ \cos (f(r)))
\]
with
\[
f \colon [0,\pi] \to \R, \ f(0) = 0, \ f(\pi) = 2 \pi. 
\] 
Such a map $u_f$ has degree zero and for 
\[ 
X := \{ f \colon [0,\pi] \to \R : u_f \in W^{1,2\al}(S^2, \R^3), \ \ f(0) = 0, \ f(\pi) = 2\pi\}
\] 
the infimum $\Lambda := \inf_{f\in X}I(f)$, where 
\[ 
I(f) := \Ea(u_f) = \pi \int_0^{\pi} \left(2 + (f')^2 + \frac{(\sin f)^2}{(\sin r)^2}\right)^{\al} \sin r \, dr 
\] 
is attained by a map $f^*\in X$ as was shown in \cite{lmm}. 
(Note that the argument given there trivially extends to the case considered here.) 
Moreover, setting $u^* := u_{f^*}$, one estimates 
\begin{align*} 
\Ea(u^*) &= \pi \int_0^{\pi} 
\left(2 + (f^*\mbox{}')^2 + \frac{(\sin f^*)^2}{(\sin r)^2}\right)^{\al} 
\sin r \, dr \\[2\jot] 
&\geqslant \pi \left(\int_0^{\pi} \left(2 + (f^*\mbox{}')^2 + \frac{(\sin f^*)^2}{(\sin r)^2}\right) 
\sin r \, dr \right)^{\al}  \left(\int_0^{\pi} \sin r \, dr \right)^{1 - \al} \\[2\jot] 
&\geqslant 2^{1 - \al} \pi \left(\int_0^{\pi} 
\left(2 \sin r + 2 |f^*\mbox{}'(\sin f^*)| \right) \, dr \right)^{\al}. 
\end{align*} 
Now there exists $r_1\in (0,\pi)$ such that $f^* (r_1)=\pi$ and hence
\begin{align*} 
\int_0^{\pi} |f^*\mbox{}'(\sin f^*)| \, dr &\geqslant \int_0^{r_1} f^*\mbox{}'(\sin f^*) \, dr - 
\int_{r_1}^{\pi} f^*\mbox{}' (\sin f^*) \, dr  \\ 
&= \left.-\cos f^*(r)\right|_0^{r_1} + \left.\cos f^*(r)\right|_{r_1}^{\pi}  \\ 
&= 4
\end{align*}
which implies $\Ea(u^*) \geqslant  6^\al 2\pi$. 
\begin{remark}
If we minimize $I(f)$ among maps $f \colon [0,\pi] \to [0,2k\pi]$ 
then we get an $\al$-harmonic map of degree 0 and 
alpha-energy at least $(4k+2)^{\al} 2 \pi$
and if we minimize $I(f)$ among maps $f \colon [0,\pi] \to [0,(2k+1)\pi]$ 
then we get an $\al$-harmonic map of degree $1$ and 
alpha-energy at least $(4k+4)^{\al} 2 \pi$. 
\end{remark}

In what follows we shall rename the map $u^*$ constructed above as $\ua$, 
to indicate its dependence on $\al$. 

\subsection{An upper bound for $\Ea(\ua)$.} 
\begin{proposition}\label{p:ub} There exists   $C > 0$ universal such that, 
for $1 < \al < \frac54$, 
\begin{equation} \label{deg0:ub} 
\Ea(\ua) < 6^{\al} 2 \pi + C (\al - 1)^{1/2}. 
\end{equation} 
\end{proposition}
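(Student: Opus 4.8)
Since $\ua = u^*$ is by construction the minimiser of $I$ over $X$, we have $\Ea(\ua) = I(f^*) = \inf_{f \in X} I(f)$, so it suffices to produce a single competitor $f_\lambda \in X$ — depending on a large parameter $\lambda > 1$ to be fixed at the end — with $I(f_\lambda) \leqslant 6^\al 2\pi + C(\al-1)^{1/2}$. The natural choice is a superposition of two co-rotational harmonic bubbles concentrating at the two poles $r = 0$ and $r = \pi$ of the parameterisation. Using the standard solutions $g_\lambda(r) := 2\arctan(\lambda \tan(r/2))$ and $h_\lambda(r) := \pi - 2\arctan(\lambda \tan((\pi-r)/2))$, which satisfy the conformality identities $g_\lambda' = \sin g_\lambda / \sin r$ and $h_\lambda' = \sin h_\lambda / \sin r$ (so that $E(u_{g_\lambda}) = E(u_{h_\lambda}) = 4\pi$), I would set $f_\lambda := g_\lambda + h_\lambda$. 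Then $f_\lambda(0) = 0$, $f_\lambda(\pi) = 2\pi$, and $|\nabla u_{f_\lambda}|$ is bounded, so $f_\lambda \in X$; its energy density is
\[
D_{f_\lambda} := (f_\lambda')^2 + \frac{\sin^2 f_\lambda}{\sin^2 r} = (g_\lambda' + h_\lambda')^2 + \big(g_\lambda' \cos h_\lambda + h_\lambda' \cos g_\lambda\big)^2 .
\]

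The next step is to split, using $x^\al = x + x(x^{\al-1} - 1)$ with $x = 2 + D_{f_\lambda} \geqslant 1$:
\[
I(f_\lambda) = \big(4\pi + E(u_{f_\lambda})\big) + R(\lambda,\al), \qquad R(\lambda,\al) := \pi \int_0^\pi (2 + D_{f_\lambda}) \big((2 + D_{f_\lambda})^{\al-1} - 1\big) \sin r \, dr ,
\]
where I have used $\pi \int_0^\pi (2 + D_{f_\lambda}) \sin r \, dr = 4\pi + E(u_{f_\lambda})$. For the Dirichlet term one estimates the bubble interaction: away from the poles $g_\lambda = \pi + O(\lambda^{-1})$ and $h_\lambda = O(\lambda^{-1})$, so $D_{f_\lambda} = O(\lambda^{-2})$ there, while near $r = 0$ one has $D_{f_\lambda} = D_{g_\lambda} + O(\lambda^{-1} g_\lambda') + O(\lambda^{-2})$ (and symmetrically near $r = \pi$); since $\pi \int_0^\pi g_\lambda' \sin r \, dr = \pi \int_0^\pi \sin g_\lambda \, dr$ is bounded, this yields $E(u_{f_\lambda}) = 8\pi + O(\lambda^{-1})$, the excess being non-negative in accordance with the bound $E \geqslant 8\pi$ on $X$.

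For the remainder, $e^t - 1 \leqslant t e^t$ ($t \geqslant 0$) gives $(2+D)^{\al-1} - 1 \leqslant (\al-1) \log(2+D) \, (2+D)^{\al-1}$, hence $R(\lambda,\al) \leqslant (\al-1) \pi \int_0^\pi \log(2 + D_{f_\lambda}) \, (2 + D_{f_\lambda})^\al \sin r \, dr$. The key computation, carried out from the explicit profiles via the substitution $v = \lambda \tan(r/2)$, is that for every $\beta \geqslant 1$ close to $1$,
\[
\pi \int_0^\pi (2 + D_{f_\lambda})^\beta \sin r \, dr \leqslant C_\beta \, \lambda^{2(\beta-1)}, \qquad \pi \int_0^\pi (2 + D_{f_\lambda})^\beta \log(2 + D_{f_\lambda}) \sin r \, dr \leqslant C_\beta \, \lambda^{2(\beta-1)} \log \lambda ;
\]
taking $\beta = \al$ (the hypothesis $\al < \tfrac{5}{4}$ keeping all exponents in a harmless range) gives $R(\lambda,\al) \leqslant C (\al-1) \lambda^{2(\al-1)} \log \lambda$. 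It then remains to optimise: the choice $\lambda = (\al-1)^{-1/2}$ makes $\lambda^{2(\al-1)} = (\al-1)^{-(\al-1)}$ bounded as $\al \searrow 1$, so $R(\lambda,\al) = O\big((\al-1) \log \tfrac{1}{\al-1}\big) = o\big((\al-1)^{1/2}\big)$ while the Dirichlet error is $O(\lambda^{-1}) = O\big((\al-1)^{1/2}\big)$; hence $\Ea(\ua) \leqslant I(f_\lambda) \leqslant 12\pi + C'(\al-1)^{1/2} \leqslant 6^\al 2\pi + C'(\al-1)^{1/2}$ for $\al - 1$ small, and enlarging the constant slightly turns this into the strict inequality \eqref{deg0:ub}. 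For $\al$ bounded away from $1$ one simply notes that $I(f^*) \leqslant I(f_{\lambda_0})$ for a fixed $\lambda_0$ is uniformly bounded on compact subsets of $(1, \tfrac{5}{4})$, which the constant $C$ absorbs. The main obstacle is the remainder estimate: linearising $(2+D)^{\al-1} - 1$ costs a factor $\log(2+D) \sim \log \lambda$ on the concentrating bubble, so one must verify that the $L^1$-mass of $(2 + D_{f_\lambda})^\al$ grows only like $\lambda^{2(\al-1)}$ and then balance this growth against the $O(\lambda^{-1})$ gain in the Dirichlet energy — which is possible precisely because $\al$ is close to $1$.
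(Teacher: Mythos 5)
Your proof is correct, and it follows the same overall strategy as the paper: bound $\Ea(\ua)=\inf_X I$ by the $\al$-energy of an explicit two-bubble competitor concentrating at scale $\lambda^{-1}$ at the two poles $r=0,\pi$, and then tune $\lambda$ against $\al-1$. The execution, however, differs in two genuine ways. First, the paper uses the single closed-form profile $f(r)=2\arctan(\lambda\tan r)$ (with $\arctan$ valued in $[0,\pi]$), which describes the same geometric configuration as your $g_\lambda+h_\lambda$ but whose energy density $e(u_f)=2\lambda^2(1+\cos^2 r)\,(\lambda^2-(\lambda^2-1)\cos^2 r)^{-2}$ is computable in closed form; this bypasses your bubble-interaction estimate $E(u_{f_\lambda})=8\pi+O(\lambda^{-1})$ entirely (and in fact yields an $O(\lambda^{-2})$ Dirichlet deficit rather than $O(\lambda^{-1})$). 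Second, for the $\al$-correction the paper uses the polynomial inequality $(1+x)^{\al}\leqslant 1+\al x+(\al-1)x^2$, reducing everything to the elementary integrals $\int e(u_f)\sin r\,dr$ and $\int e(u_f)^2\sin r\,dr\leqslant C\lambda^2$ and leading to the choice $\lambda=(\al-1)^{-1/4}$, whereas you linearize $(2+D)^{\al-1}-1$ at the cost of a factor $\log(2+D)\sim\log\lambda$, get the remainder bound $C(\al-1)\lambda^{2(\al-1)}\log\lambda$, and take $\lambda=(\al-1)^{-1/2}$; both balances land on the same $O((\al-1)^{1/2})$ error, yours dominated by the Dirichlet interaction, the paper's shared between the two competing terms. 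Your approach is softer (no explicit density needed) at the price of the interaction estimate; the paper's is more computational but entirely elementary. The uniformity you need does hold: since $\sup D_{f_\lambda}\leqslant C\lambda^2$, one has $(2+D)^{\beta}\leqslant(2+D)(C\lambda^2)^{\beta-1}$, so $\pi\int_0^{\pi}(2+D)^{\beta}\sin r\,dr\leqslant C\lambda^{2(\beta-1)}$ with a constant uniform for $\beta\in[1,\tfrac54]$, and $\lambda^{2(\al-1)}=(\al-1)^{-(\al-1)}$ is indeed bounded. The only cosmetic slip is writing the near-pole error as $O(\lambda^{-2})$ rather than $O\bigl(\lambda^{-2}(1+(g_\lambda')^2)\bigr)$, which is harmless after integration against $\sin r\,dr$ since $\int_0^{\pi}(g_\lambda')^2\sin r\,dr=O(1)$.
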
 

\begin{proof} For $\l > 1$, let $f(r) := 2 \arctan(\l \tan r), \ r \in [0,\pi]$ 
with $\arctan$ also taking values in $[0,\pi]$. 
Then as $r$ increases from 0 to $\pi/2, \ f(r)$ increases from 0 to $\pi$, 
i.e., $u_f$ maps the upper hemisphere to the full sphere with 
the equator being all mapped to the South Pole $(0,0,-1)$. 
The requirement that $\arctan$ takes values between 0 and $\pi$ forces 
$f(r)$ to increase from $\pi$ to $2\pi$ as $r$ increases from $\pi/2$ to $\pi$, 
i.e., $u_f$ maps the lower hemisphere to the full sphere again 
but with the opposite orientation (which confirms that $u_f$ has degree zero). 
Furthermore, $u_f \in X$. 

In \S 8.1 of \cite{lmm}, we showed that the energy density $e(u_f)$ is given by 
\[ 
e(u_f) = \frac12 \left( (f')^2 + \frac{(\sin f)^2}{(\sin r)^2}\right). 
\] 
So we calculate 
\[ 
f'(r) = \frac{2 \l}{\cos^2 r + \l^2 \sin^2 r}, \qquad 
\sin(f(r)) = \frac{2 \l \tan r}{1 + \l^2 \tan^2 r} = (\sin r) (\cos r) f'(r). 
\] 
Therefore, 
\begin{equation} \label{eq:edensity} 
e(u_f) = 2 \l^2 \frac{1 + \cos^2 r}{(\l^2 - (\l^2 -1)\cos^2 r)^2} 
\end{equation} 
and 
\begin{equation}\label{eq:Ea1} 
\Ea(u_f) = 2^{\al} \pi \int_0^{\pi}(1 + e(u_f))^{\al} (\sin r) \,dr 
= 2^{\al + 1}\pi \int_0^{\pi/2}(1 + e(u_f))^{\al} (\sin r) \,dr,  
\end{equation} 
where we have used the even symmetry of $\cos^2 r$ and $\sin r$ about $\pi/2$. 
We shall use the inequality 
\begin{equation}\label{eq:alineq} 
(1 + x)^{\al} \leqslant 1 + \al x + (\al -1)x^2, \quad \al \in [1,2], \ x \geqslant 0 
\end{equation} 
and therefore, we need to estimate the integrals 
\begin{align}  
I_1&:= 
\int_0^{\pi/2} \frac{1 + \cos^2 r}{(\l^2 - (\l^2 -1)\cos^2 r)^2} \, (\sin r) \, dr 
\label{eq:I1} 
\\ 
I_2&:=
\int_0^{\pi/2} \frac{(1 + \cos^2 r)^2}{(\l^2 - (\l^2 -1)\cos^2 r)^4} \, (\sin r) \, dr . 
\label{eq:I2} 
\end{align} 
We make the change of variables $t = \cos r$ and set $a^2 := 1 - \l^{-2}$. 
Then \eqref{eq:I1} and \eqref{eq:I2} become 
\begin{align}  
I_1&:= 
\l^{-4} \int_0^1 \frac{1 + t^2}{(1 - a^2 t^2)^2} \, dt 
\label{eq:I11} 
\\ 
I_2&:=
\l^{-8} \int_0^1 \frac{(1 + t^2)^2}{(1 - a^2 t^2)^4} \, dt . 
\label{eq:I22} 
\end{align} 
To estimate \eqref{eq:I11} we use 
\[ 
\frac{1 + t^2}{(1 - a^2 t^2)^2} < \frac{1}{2a^2} \left( 
\frac{1}{(1-at)^2} + \frac{1}{(1+at)^2} \right) 
\] 
and note that 
\[ 
\int_0^1 \left( \frac{1}{(1-at)^2} + \frac{1}{(1+at)^2} \right) \, dt 
= \frac{2}{1-a^2} =2\l^2.
\] 
Therefore, 
\begin{equation} \label{eq:E1} 
\int_0^{\pi /2} e(u_f) \, (\sin r) \, dr
= 2 \l^2 I_1 < \frac{2}{a^2} = \frac{2\l^2}{\l^2 - 1}.
\end{equation} 
To estimate \eqref{eq:I22} we use 
\[ 
\frac{(1 + t^2)^2}{(1 - a^2 t^2)^4} < \frac{1}{a^4(1-at)^4} 
\] 
and note that 
\[ 
\frac{1}{a^4} \int_0^1 \frac{1}{(1-at)^4} \, dt = 
\frac{1}{3a^5} \left(\frac{1}{(1-a)^3} - 1 \right) < \frac{1}{a^4(1-a)^3}. 
\] 
Now 
\[ 
\frac{1}{a^4(1-a)^3} = \frac{(1+a)^3}{a^4(1-a^2)^3} < 8 \l^6 a^{-4} 
\] 
and therefore, if $\l^2 > 2$ 
so that $a^2 > \frac12$ we have that 
$I_2 < 32 \l^{-2}$. It follows that 
\begin{equation} \label{eq:E2} 
\int_0^{\pi /2} \big( e(u_f)\big)^2 \, (\sin r) \, dr = 4 \l^4 I_2 < 128 \l^2. 
\end{equation} 
Using \eqref{eq:alineq} in \eqref{eq:Ea1} we get 
\[ 
\Ea(u_f) \leqslant 2^{\al + 1} \pi \left( 1 + \al \int_0^{\pi /2} e(u_f) \, (\sin r) \, dr + 
(\al - 1) \int_0^{\pi /2} \big( e(u_f)\big)^2 \, (\sin r) \, dr \right) 
\] 
from which it follows that 
\[ 
\Ea(u_f) < 2^{\al + 1} \pi \left( 1 + \frac{2 \al \l^2}{\l^2 - 1} + 128 (\al - 1) \l^2\right). 
\] 
Still assuming that $\l^2 > 2$ we have 
$\frac{\l^2}{\l^2 - 1} < 1 + 2 \l^{-2}$ and therefore, 
\[ 
\Ea(u_f) < 2^{\al + 1} \pi \left( 3 + \frac{4\al}{\l^2} + 2(\al - 1) + 128 (\al - 1) \l^2\right). 
\] 
By choosing $\l = (\al - 1)^{-(1/4)}$ (with $\al \in (1,\frac54)$ so that $\l^2 > 2$) 
we obtain 
\[ 
\Ea(u_f) < 2^{\al + 1} \pi \big( 3 + C(\al - 1)^{1/2}\big) 
< 6^{\al} 2 \pi + C(\al - 1)^{1/2}. 
\] 
Finally, since $\ua$ minimizes $I$ among maps in $X$, we have that 
\[ 
\Ea(\ua) < \Ea(u_f) < 6^{\al} 2 \pi + C(\al - 1)^{1/2}, 
\] 
namely, \eqref{deg0:ub}. 
\end{proof} 
\subsection{Completion of Proof of Theorem \ref{t:optimal}.} 
\begin{proof} 
Given $\ep > 0$ choose $\frac54 > \al_0 > 1$ so that $C(\al_0 - 1)^{1/2} < \ep$. 
Then, by Theorem \ref{t:new} and Proposition \ref{p:ub}, for $1 < \al < \al_0$ we have 
\[ 
6^{\al} 2 \pi \leqslant \Ea(\ua) < 6^{\al} 2 \pi + C(\al - 1)^{1/2} < 6^{\al} 2 \pi + \ep, 
\] 
which concludes the proof. 
\end{proof} 

\begin{remark} The map $u_f$ constructed above with $f(r) = 2 \arctan(\l \tan r)$ 
can also be written as 
\[ 
u_{\l}(z) = \frac{2\l z}{1-|z|^2}, \quad z \in \C \cup \{\infty \}, 
\] 
where $S^2$ is identified with $\C \cup \{\infty\}$ 
via stereographic projection from the South Pole. 
The calculations seemed to us somewhat more messy in this representation of $S^2$. 
However, this representation suggests the following example for 
proving the optimality of the degree 1 example referred to 
just after Theorem \ref{t:optimal}: 
\[ 
u_{\ep}(z) := \frac{z(1 - \ep^2|z|^2)}{(\ep^2-|z|^2)}, \quad 0 < \ep \ll 1, 
\text{ depending on $\al$}. 
\] 
As $\ep \downarrow 0$, $u_{\ep}$ develops a holomorphic bubble at $0$ 
and also at $\infty$ and, on compact subsets of $0 < |z| < \infty$, 
$u_{\ep}(z)$ converges smoothly to $\frac{1}{\bar{z}}$. 
It is straightforward to check that the corresponding 
$f_{\ep}\colon [0,\pi] \to [0,3\pi]$ is given by 
\[ 
f_{\ep}(r) := 2 \arctan \left(
\frac{(\tan(r/2))(1-\ep^2 \tan^2(r/2))}{\ep^2 - \tan^2(r/2)}
\right).
\] 
(The values taken by $\arctan$ in the vicinity of 
$\tan(r/2) = \ep$ and $\tan(r/2) = 1/\ep$ 
have to be chosen appropriately so as to make $f_{\ep}$ continuous.) 
This example shows how the weak limit of degree 1 $\al$-harmonic maps 
may have degree $-1$. 
\end{remark}

\end{document}